\newtheorem{thm}{Theorem}[section]
\newtheorem{prm}[thm]{Problem}
\numberwithin{equation}{section}
\theoremstyle{definition}
\newtheorem{definition}[thm]{Definition}
\newtheorem{rem}[thm]{Remark}
\begin{document}

\title[Well-posedness of differential-difference equations]
{Well-posedness of heat and wave equations generated by Rubin's $q$-difference operator in Sobolev spaces}

\author[Serikbol Shaimardan]{Serikbol Shaimardan}
\address{
  Serikbol Shaimardan:
  \endgraf
  L. N. Gumilyov Eurasian National University
  \endgraf
  5 Munaytpasov str., Astana, 010008
  \endgraf
  Kazakhstan
  \endgraf
  {\it E-mail address} {\rm shaimardan.serik@gmail.com}
  }

\author[Lars-Erik Persson]{Lars-Erik Persson}
\address{
  Lars-Erik Persson:
  \endgraf
  Department of Computer Science and Computational Engineering
  \endgraf
  UiT The Arctic University of Norway, Campus Narvik
  \endgraf
  Narvik, Norway
  \endgraf
   and
  \endgraf
  Department of computer science and mathematics Karlstad university
  \endgraf
  Karlstad, Sweden.
  \endgraf
  {\it E-mail address} {\rm larserik6pers@gmail.com}
  }

\author[N. Tokmagambetov ]{Niyaz Tokmagambetov }
\address{
  Niyaz Tokmagambetov:
  \endgraf 
  Centre de Recerca Matem\'atica
  \endgraf
  Edifici C, Campus Bellaterra, 08193 Bellaterra (Barcelona), Spain
  \endgraf
  and
  \endgraf   
  Institute of Mathematics and Mathematical Modeling
  \endgraf
  125 Pushkin str., 050010 Almaty, Kazakhstan
  \endgraf  
  {\it E-mail address:} {\rm tokmagambetov@crm.cat; tokmagambetov@math.kz}
  }

\thanks{SS  was supported in parts by the MESRK (Ministry of Education and Science of the Republic of Kazakhstan) grant AP08052208.}

\date{}

\begin{abstract}
In this paper, we investigate difference-differential operators of parabolic and hyperbolic types. Namely, we consider non-homogenous heat and wave equations for Rubin's difference operator. Well-posedness results are obtained in appropriate Sobolev type spaces. In particular, we prove that the heat and wave equations generated by Rubin's difference operator have unique solutions. We even show that these solutions can be represented by explicit formulas.

\end{abstract}

\subjclass[2010]{34C10, 39A10, 26D15.}

\keywords{Rubin difference operator, heat equation, wave equation, a priori estimate, $q$-derivative, $q$-calculus, well-posedness, Sobolev type space, $q$-difference operator}

\maketitle

\section{Introduction}

In the theory of partial differential equations the heat and wave equations are most important and basic examples. There are a lot of papers studying these equations in different modifications. Also, in this paper we aim to study a $q$-analogues of the following classical heat and wave equations
\begin{equation*}
\left\{
  \begin{array}{ll}
    u_t(t, x)-\Delta u(t, x)+mu(t, x)=f(t, x), & (t, x)\in\hbox{$\mathbb{R^+}\times\mathbb{R}^n$,} \\
    u(0, x)=\varphi(x),   &       \hbox{$x\in \mathbb{R}^n$,}
  \end{array}
\right.
\end{equation*}
and 
\begin{equation*}
\left\{
  \begin{array}{ll}
    u_{tt}(t, x)-\Delta u(t, x)+bu_{t}(t, x)+mu(t, x)=f(t, x), & (t, x)\in\hbox{$\mathbb{R^+}\times\mathbb{R}^n$,} \\
    u(0, x)=\varphi(x),   u_t(0, x)=\psi(x), & \hbox{$x\in \mathbb{R}^n$},
  \end{array}
\right.
\end{equation*}
 where $b, m\in \mathbb{R}$
  and $\Delta$ is the Laplace operator in
$\mathbb{R}^n$.

Here, we will consider the above equations with Rubin's type $q$-difference operators instead of $\Delta$. If we go back to the history, we see that already in 1748 L. Euler \cite{Euler} introduced basic elements of $q$-calculus. He considered the infinite product $(q;q)_\infty^{-1}=\prod\limits_{k=0}^\infty\frac{1}{1-q^{k+1}}$, $|q|<1$. Around hundred years later  the progress  continued under E. Heine, who in 1846 considered a generalization of the hypergeometric ($q$-hypergeometric) series (see \cite{Gasper} and \cite{Heine}). In the second part of the twentieth  century the $q$-calculus served as a bridge between physics and mathematics.  The $q$-calculus has numerous applications in various fields of mathematics e.g number theory, combinatorics, special functions,  harmonic analysis, fractional calculus and also for scientific problems in some applied areas such as computer science, quantum mechanics and quantum physics (see e.g. 
  \cite{Annaby}, \cite{AKAMST2021}, \cite{CWM2021}, \cite{Ernst1}, \cite{Exton},\cite{Fitouhi} and  \cite{Maligranda}). For the further development and recent results in $q$-calculus we refer to the books \cite{Agarwal} and  \cite{Ernst}  and the references given therein.

In the beginning of the last century, especially F.H. Jackson  \cite{Jackson1} (see also \cite{Jackson}), R.D. Carmichael \cite{Carmichael}, T.E. Mason \cite{Mason}, and C.R. Adams \cite{Adams1} (see also \cite{Adams2}) intensively studied the $q$-difference equations. Thereafter, the $q$-difference equations are used to modelling some important linear and nonlinear problems and thereby played an important role in different fields of engineering and science \cite{Bangerezako1}(see also \cite{Bangerezako2}). Moreover, there has been a great interest in finding difference methods to study exact or approximate solutions of ordinary and partial $q$-differential equations,  see e.g. \cite{Area}, \cite{Jian}, \cite{DaJian}, \cite{Jafari}, \cite{Jiang} and \cite{Zheng}.

In 1997, R.L. Rubin \cite{Rubin2} studied spectral properties of one $q$-difference operator (Rubin’s operator). By using established spectral properties, he constructed the $q$-difference calculus associated with the Rubin’s operator. A $q^2$-analogue of the exponential Fourier analysis via $q^2$-Fourier transform was constructed in \cite{Koornwinder}. There the author used analogues of trigonometric functions (or orthogonality results) and applied it to $q$-deformed quantum mechanics \cite{Schoichi03}. Moreover, he used the Dalembert and Duhamel's techniques to investigation the wave $q$-difference equation, using the $q^2$-Fourier multiplier tools to aid the construction of solutions in \cite{Rubin1}. The papers \cite{Saoudi1},  \cite{Saoudi2} and \cite{Fitouhi2012} were devoted to the study of the $q$-analogue of the Fourier transform and to show how it plays a central role in solving the $q$-heat and $q$-wave equations associated with the Rubin’s $q$-difference operator $\partial_q$.

However, the theory of the $q$-difference equations with the Rubin’s $q$-difference operator $\partial_q$ is still at the initial stage and many aspects of this theory need to be explored. To the best of our knowledge, the theory of the Cauchy problems for linear, homogeneous and nonhomogeneous $q$-difference equations generated by the Rubin’s $q$-difference operator are not yet developed.

From the other side, it is well known that the quantum calculus provides natural discrete modifications of the heat and wave equations. To discretize, we will replace the partial derivatives  $\frac{\partial{u}}{\partial{x}} $  by the Rubin’s $q$-difference operators $\partial_q$ in space, respectively, and we will attempt to develop the $q$-analogues of these problems. Here, as we take a limit as $q$ tends to $1$, one recovers results related to the continuous heat and wave equations.

Motivated by this, we study the Cauchy problems for the non-homogenous heat and wave equations generated by the Rubin's difference operator $\mathcal{D}^2_{q,x}$ in the Sobolev space $L^2_q\left(\mathbb{R}^+_q\right)$
\begin{equation*}
\left\{
  \begin{array}{ll}
    u_t(t, x)-\mathcal{D}^2_{q,x}u(t, x)+mu(t, x)=f(t, x), & (t, x)\in\hbox{$\mathbb{R^+}\times\mathbb{R}^+_q$,} \\
    u(0, x)=\varphi(x),   &       \hbox{$x\in \mathbb{R}^+_q$,}
  \end{array}
\right.
\end{equation*}
and 
\begin{equation*}
\left\{
  \begin{array}{ll}
    u_{tt}(t, x)-\mathcal{D}^2_{q,x}u(t, x)+bu_{t}(t, x)+mu(t, x)=f(t, x), & (t, x)\in\hbox{$\mathbb{R^+}\times\mathbb{R}^+_q$,} \\
    u(0, x)=\varphi(x),   u_t(0, x)=\psi(x), & \hbox{$x\in \mathbb{R}^+_q$},
  \end{array}
\right.
\end{equation*}
here the $q$-partial differential operator \cite{Saoudi2, Rubin1} is defined by
\begin{eqnarray*}
\mathcal{D}_{q,x}u(t,x):=\frac{u(t,q^{-1}x)-u(t,qx)+u(t,-qx)-u(t,-x)+u(t,-q^{-1}x)-u(t,-x)}{2x(1-q)}.
\end{eqnarray*}

The method developed in this paper is different from that given in \cite{Saoudi2} and \cite{Rubin2}, in our opinion, simpler and more clear and, in addition give the possibility to prove uniqueness and derive explicit formulas for the solution.

\subsection{Structure of the  paper}
In Section \ref{S3} we prove that these equations both has a unique solution and also derive an explicit formula for this solution in both cases (see Theorems \ref{TH1} and \ref{TH2}, respectively). In order to illustrate that our method can  work also for other equations in  Section \ref{S4} we
prove a corresponding uniqueness result for a non-homogeneous equation with homogeneous boundary conditions. Also in this case we derive an
explicit formula for the solution (see Theorem \ref{TH3}). In order not to disturb our discussions in these two main Sections we present some necessary
preliminaries   in Section \ref{Pre}.

\section{Preliminaries}
\label{Pre}

We start by recalling some basic notation in the $q$-calculus, see e.g.  the books \cite{Cheung} and  \cite{Ernst}. Throughout this paper, we assume that $0<q<1$.

Let $\alpha \in \mathbb R$. Then a $q$-real number $[\alpha ]_{q}$ is defined by
$$
[\alpha ]_{q}:=\frac{1-q^{\alpha }}{1-q},
$$
where $\mathop{\lim }\limits_{q\rightarrow 1}\frac{1-q^{\alpha }}{1-q}%
=\alpha $.

We introduce for any $x, a\in\mathbb R$
\begin{eqnarray*}
(x,a)^0_q=1, \;\;\; (x,a)^n_q=\prod\limits_{k=0}^n\left(x-q^ka\right), \;\;\;(x,a)^\infty_q&=&\lim\limits_{n\rightarrow\infty}(x,a)^n_q.
\end{eqnarray*}

The $q$-analogues of the binomial coefficients and the Gamma function are defined by
\begin{equation*}
[n]_{q}!:=\left\{
\begin{array}{l}
1,\,\,\,\,\,\,\,\,\,\,\,\,\,\,\,\,\,\,\,\,\,\,\,\,\,\,\,\,\,\,\,\,\,\,\,\,\,\,\,\,\,\,\,\,\,\,\,\,\,\,\,\,\, \hbox{if} \,\,\, n=0 \\
 {[1]_{q}\times [2]_{q}\times \cdots \times [n]_{q} \,\,\, \hbox{if} \,\,\, n\in \mathbb N}
\end{array}
\right.
\end{equation*}
and
\begin{eqnarray*}
\Gamma_q(x)=\frac{(q, q)^\infty_q}{(q^x, q)^\infty_q}(1-q)^{1-x},
\end{eqnarray*}
respectively.

The $q^2$-exponentials $e_{q^2}(z)$ (see \cite{Koornwinder}, \cite{Rubin2} and \cite{Rubin1}) are defined by
\begin{eqnarray*}
e_{q^2}(z):=\cos\limits_{q^2}\left(-iz\right)+i\sin\limits_{q^2}\left(-iz\right),
\end{eqnarray*}
where
\begin{eqnarray*}
\cos\limits_{q^2}(z):=\sum\limits_{k=0}^\infty\frac{(-1)^kq^{k(k+1)}z^{2k}}{[2k]_{q}!},\\
\end{eqnarray*}
and
\begin{eqnarray*}
\sin\limits_{q^2}(z):=\sum\limits_{k=0}^\infty\frac{(-1)^kq^{k(k+1)}z^{2k+1}}{[2k+1]_{q}!}
\end{eqnarray*}
are the $q^2$-trigonometric functions. Note that the series defining $e_{q^2}(z)$ is absolutely convergent for all $z$ in the plane, $0 <q <1$, since both of  the  series defining its component functions are absolutely convergent. Moreover, it yields that $\lim\limits_{q\rightarrow1} e_{q^2}(z) = e(z)$  pointwise and uniformly on compact  sets, because both of its component functions satisfy the corresponding limits
($\lim\limits_{q\rightarrow1} \cos\limits_{q^2}(z) = \cos (z)$ and $\lim\limits_{q\rightarrow1} \sin\limits_{q^2}(z) = \sin (z)$).

The $q$-analogue differential operator is defined by (see \cite{Jackson1})
\begin{eqnarray*}
D_{q}f(x)=\frac{f(x)-f(qx)}{x(1-q)},
\end{eqnarray*}
and the $q^2$-differential operator or Rubin operator is defined by (see \cite{Rubin2} and \cite{Rubin1})
\begin{eqnarray*}\label{additive2.5}
\mathcal{D}_{q}f(x)=\frac{f(q^{-1}x)+f(-q^{-1}x)
-f(qx)+f(-qx)-2f(-x)}{2x(1-q)}.
\end{eqnarray*}

Moreover, we will use the $q$-partial differential operators $\mathcal{D}_{q,x}u(t,x)$ in the following form:
\begin{eqnarray*}
\mathcal{D}_{q,x}u(t,x):=\frac{u(t,q^{-1}x)+u(t,-q^{-1}x)
-u(t,qx)+u(t,-qx)-2u(t,-x)}{2x(1-q)}.
\end{eqnarray*}

Note that if $f$ is differentiable at $x$, then $\lim\limits_{q\rightarrow1}\mathcal{D}_{q}f(x)=f'(x)$ and 
$\lim\limits_{q\rightarrow1}\mathcal{D}_{q,x}u(t,x)=\frac{\partial u}{\partial x}(t,x)$.

The definite $q$-integral or the $q$-Jackson integral of a function $f$ is defined by the formula (see \cite{Jackson1} and \cite{Jackson})
\begin{eqnarray}\label{additive2.1A}
\int\limits_0^xf(t)d_qt:=(1-q)x\sum\limits_{k=0}^\infty{q^k}f(q^kx),\quad 0<x<\infty,
\end{eqnarray}
and the improper $q$-integral of a function $f(x):[0, \infty)\rightarrow \mathbb{R}$, is defined by the formula
\begin{eqnarray}\label{additive2.1B}
\int\limits_0^{\infty} f(t)d_qt:=(1-q)\sum\limits_{k=-\infty}^\infty{q^k}f(q^k).
\end{eqnarray}
Note that the series in the right hand sides of (\ref{additive2.1A}) and (\ref{additive2.1B}) converge absolutely.

We denote $\mathbb{R}^+_q=\{q^k, k\in \mathbb{Z}\}$ and define
\begin{eqnarray*}
L^p_q\left(\mathbb{R}^+_q\right)&:=&\left\{f:\left(\int\limits_0^\infty |f(x)|^pd_qx\right)^\frac{1}{p}<\infty\right\},
\end{eqnarray*}
for $0<p<\infty$ and 
\begin{eqnarray*}
L^\infty_q\left(\mathbb{R}^+_q\right)&:=&\left\{f:\sup\limits_{k\in\mathbb{Z}}|f(q^k)|<\infty\right\}.
\end{eqnarray*}

Moreover,  we define $S_q\left(\mathbb{R}^+_q\right)$ as the set of functions $f$ defined on $\mathbb{R}^+_q$ satisfying the following condition:
$$
P_{n,m,q}\left(f\right):=\sup\limits_{x\in \mathbb{R}^+_q}\left|x^m\mathcal{D}^n_qf(x)\right|<\infty,
$$
for all $m, n \in \mathbb{N}$ and where $\lim\limits_{x\rightarrow0}\mathcal{D}^n_qf(x)$ exists in  $\mathbb{R}^+_q$ (see \cite{Fitouhi}).

We also define the space $S'_q\left(\mathbb{R}^+_q\right)$ as the topological dual of $S_q\left(\mathbb{R}^+_q\right)$. The space $S'_q\left(\mathbb{R}^+_q\right)$ is called the space of tempered distributions on $\mathbb{R}^+_q$.

\begin{definition} \label{Def:01}
(see \cite{Fitouhi} and \cite{Rubin1})
The $q^2$-Fourier transform $\hat{f}$ is  defined as follows
\begin{eqnarray*}
\hat{f}(\xi)&=&\frac{1}{2\pi_q}
\int\limits_0^\infty f(x)e_{q^2}(-i x \xi)d_{q} x, \;\;\; \xi\in\mathbb{R}^{+}_{q},
\end{eqnarray*}
for $f\in S_q\left(\mathbb{R}^+_q\right)$ and  its inverse $\check{g}(x)$ is given by
\begin{eqnarray*}
\check{g}(x)&=& \frac{1}{2\pi_q}
\int\limits_0^\infty e_{q^2}(i x \xi)g(\xi)d_q \xi, \;\;\; x\in\mathbb{R}^{+}_{q},
\end{eqnarray*}
for $g\in S_q\left(\mathbb{R}^+_q\right)$, where $\frac{1}{\pi_q}:=\frac{\left(1+q\right)^\frac{1}{2}}{\Gamma_{q^2}(\frac{1}{2})}$.
\end{definition}

 Note that the $q^2$-Fourier transform is an isomorphism from $L^2_q(\mathbb{R}^+_q)$ onto itself. In fact, we have the following
version of the Parseval identity (see e.g. \cite{Rubin2} and \cite{Rubin1})
\begin{eqnarray}\label{additive2.2}
\|\widehat{f}\|_{L^2_q(\mathbb{R}^+_q)}=\|f\|_{L^2_q(\mathbb{R}^+_q)},   \forall  f\in L^2_q(\mathbb{R}^+_q).
\end{eqnarray}
Moreover,  for $u\in S'_q\left(\mathbb{R}^+_q\right)$ it yields that (see e.g. \cite{Rubin1}):
\begin{eqnarray}\label{additive2.3}
\widehat{\partial^nu}=(i\xi)^n\widehat{u}, 
\end{eqnarray}
for all $\xi\in\mathbb{R}^{+}_{q},$ where $n\in\mathbb{N}$.

\begin{definition} (see \cite{Saoudi1}) For $s \in \mathbb{R}$, we define the Sobolev space $W_q^s\left(R^+_q\right)$ as
$$
W_q^s\left(R^+_q\right):=\left\{u\in S'_q\left(\mathbb{R}^+_q\right):
\left(1+\left|\xi\right|^2\right)^\frac{s}{2}\widehat{u}\in L^2_q\left(\mathbb{R}^+_q\right)\right\},
$$
equipped with the norms
$$
\|u\|^2_{W_q^s\left(R^+_q\right)}:=
\left(\int\limits_0^\infty\left(1+\left|\xi\right|^2\right)
^s|\widehat{u}(\xi)|^2d_q \xi\right)^2.
$$
\end{definition}

Let $0<T<\infty$. We introduce also the spaces  $C^k\left( [0, T]; W^s_q\left(\mathbb{R}^+_q\right)\right)$ and
$C^k\left( [0, T]; L^2_q\left(\mathbb{R}^+_q\right)\right)$  defined by the finiteness of the norms
$$
\|u\|_{C^k\left( [0, T]; W^s_q\left(\mathbb{R}^+_q\right)\right)}:=
\sum\limits_{n=0}^k\max\limits_{0\leq t\leq T}\|\partial_t^nu(t,.)\|_{W_q^s\left(R^+_q\right)}.
$$
and
$$
\|u\|_{C^k\left( [0, T]; L^s_q\left(\mathbb{R}^+_q\right)\right)}:=
\sum\limits_{n=0}^k\max\limits_{0\leq t\leq T}\|\partial_t^nu(t,.)\|_{L^2_q\left(\mathbb{R}^+_q\right)},
$$
respectively.

\textbf{Notation.} The symbol $M \lesssim K$ means that there exists $\gamma> 0$ such that
$M \leq \gamma K$, where $\gamma$ is a constant.

\section{Main results concerning the $q$-heat and $q$-wave  equations}
\label{S3}

Our first main result concerns the solvability of the following. 
\begin{prm}[$q$-Heat Equation]
\label{Pr1}
Let $m\in\mathbb{R}$ and $T>0$. We consider the following Cauchy problem (CP):
\begin{equation}
\label{additive3.1}
\left\{
 \begin{array}{ll}
u_t(t, x)-\mathcal{D}^2_{q,x}u(t, x)+mu(t, x)=f(t, x), & (t, x)\in\hbox{$[0, T]\times\mathbb{R}^+_q$,} \\
u(0, x)=\varphi(x),   
& \hbox{$x\in \mathbb{R}^+_q$.}
  \end{array}
\right.
\end{equation}
\end{prm}

\begin{thm} 
\label{TH1}
Assume that $m$ is positive and $T<\infty$. Let $\varphi \in W_q^2\left(\mathbb{R}^+_q\right)$ and $f\in C\left( [0, T]; W^2_q\left(\mathbb{R}^+_q\right)\right)$. Then there exists a unique solution of Problem \ref{Pr1}
$$
u\in C^1\left([0, T]; L^2_q\left(\mathbb{R}^+_q\right) \right) \cap C\left( [0, T]; W_q^2\left(\mathbb{R}^+_q\right)\right).
$$
Moreover, this solution can be represented by the  explicit formula
\begin{equation}
\label{additive3.1A}
\begin{split}
u(t,x) & = \frac{1}{4\pi_q^{2}}
\int\limits_{0}^{t} \int\limits_0^\infty \int\limits_0^\infty e^{ - (t-\tau)(\xi^2+m)}  f(\tau, x) e_{q^2}(-i\xi x) e_{q^2}(i\xi x) d_q x d_q\xi d\tau \\ 
& \,\,\,\,\,\,\,\,\,\,\,\,\,\,\,\,\,\,\,\, 
+ \frac{1}{4\pi_q^{2}}
\int\limits_0^\infty \int\limits_0^\infty e^{-t(m+\xi^2)} \varphi(x) e_{q^2}(-i\xi x) e_{q^2}(i\xi x) d_q x d_q\xi,
\end{split}
\end{equation}
for $(t, x)\in[0, T]\times\mathbb{R}^+_q$.
\end{thm}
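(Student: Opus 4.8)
The plan is to diagonalize the spatial operator $\mathcal{D}^2_{q,x}$ by means of the $q^2$-Fourier transform, reducing the Cauchy problem \eqref{additive3.1} to a $\xi$-parametrized family of scalar linear ODEs in $t$, to solve these explicitly, and then to invert. Write $v(t,\xi):=\widehat{u(t,\cdot)}(\xi)$. First I would extend the differentiation rule \eqref{additive2.3} from $S_q(\mathbb{R}^+_q)$ to $S'_q(\mathbb{R}^+_q)$ by duality, so that $\widehat{\mathcal{D}^2_{q,x}u}(t,\xi)=(i\xi)^2 v(t,\xi)=-\xi^2 v(t,\xi)$. Applying the transform to \eqref{additive3.1} then gives, for each fixed $\xi\in\mathbb{R}^+_q$,
\[
\partial_t v(t,\xi)+(\xi^2+m)\,v(t,\xi)=\widehat{f}(t,\xi),\qquad v(0,\xi)=\widehat{\varphi}(\xi),
\]
a first-order linear ODE whose integrating factor is $e^{t(\xi^2+m)}$; hence it has the unique solution
\[
v(t,\xi)=e^{-t(\xi^2+m)}\,\widehat{\varphi}(\xi)+\int_0^t e^{-(t-\tau)(\xi^2+m)}\,\widehat{f}(\tau,\xi)\,d\tau .
\]

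Next I would establish the regularity via a priori estimates, crucially using that $m>0$: for $t\ge 0$ one has $0<e^{-t(\xi^2+m)}\le 1$ uniformly in $\xi$. Multiplying the formula for $v$ by the Sobolev weight $(1+|\xi|^2)$, applying the triangle and Minkowski integral inequalities together with the Parseval identity \eqref{additive2.2}, yields
\[
\|u(t,\cdot)\|_{W_q^2(\mathbb{R}^+_q)}\;\lesssim\;\|\varphi\|_{W_q^2(\mathbb{R}^+_q)}+\int_0^t\|f(\tau,\cdot)\|_{W_q^2(\mathbb{R}^+_q)}\,d\tau\;\le\;\|\varphi\|_{W_q^2(\mathbb{R}^+_q)}+T\max_{0\le\tau\le T}\|f(\tau,\cdot)\|_{W_q^2(\mathbb{R}^+_q)},
\]
which is finite by hypothesis; continuity of $t\mapsto u(t,\cdot)$ into $W_q^2(\mathbb{R}^+_q)$ then follows from dominated convergence applied to the absolutely convergent $q$-Jackson series defining the norm. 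For the time derivative, the ODE gives $\partial_t v(t,\xi)=-(\xi^2+m)v(t,\xi)+\widehat{f}(t,\xi)$; since $\xi^2+m\lesssim 1+|\xi|^2$, the first term is controlled in $L^2_q(\mathbb{R}^+_q)$ by $\|u(t,\cdot)\|_{W_q^2(\mathbb{R}^+_q)}$, and the second lies in $L^2_q(\mathbb{R}^+_q)$ because $C([0,T];W_q^2(\mathbb{R}^+_q))\subset C([0,T];L^2_q(\mathbb{R}^+_q))$, so $\partial_t u\in C([0,T];L^2_q(\mathbb{R}^+_q))$, i.e. $u\in C^1([0,T];L^2_q(\mathbb{R}^+_q))$. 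Taking the inverse transform of this $v$ and retracing the steps shows that the resulting $u$ indeed solves \eqref{additive3.1}.

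To obtain the explicit representation \eqref{additive3.1A}, I would apply the inverse $q^2$-Fourier transform $u(t,x)=\frac{1}{2\pi_q}\int_0^\infty e_{q^2}(i\xi x)\,v(t,\xi)\,d_q\xi$, substitute the $q$-Jackson integral definitions of $\widehat{\varphi}(\xi)$ and $\widehat{f}(\tau,\xi)$, and interchange the order of $q$-integration; this is legitimate since all the series involved converge absolutely and the exponential factors are bounded by $1$. Collecting the normalizing constants ($\frac{1}{2\pi_q}\cdot\frac{1}{2\pi_q}=\frac{1}{4\pi_q^2}$) produces exactly the two terms of \eqref{additive3.1A}.

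For uniqueness, suppose $u_1,u_2$ both solve \eqref{additive3.1} in the stated class and set $w:=u_1-u_2$; then $w$ solves the homogeneous problem with zero data, its transform satisfies $\partial_t\widehat{w}+(\xi^2+m)\widehat{w}=0$ with $\widehat{w}(0,\xi)=0$, forcing $\widehat{w}\equiv 0$, and \eqref{additive2.2} gives $w\equiv 0$; equivalently, the a priori estimate above with $\varphi=0$, $f=0$ yields $\|w(t,\cdot)\|_{L^2_q(\mathbb{R}^+_q)}\le e^{-mt}\|w(0,\cdot)\|_{L^2_q(\mathbb{R}^+_q)}=0$. The main obstacle is not the algebra but the analytic bookkeeping: justifying that \eqref{additive3.1A} genuinely defines an element of $C^1([0,T];L^2_q(\mathbb{R}^+_q))\cap C([0,T];W_q^2(\mathbb{R}^+_q))$ that may be differentiated in $t$ under the $q$-integral sign, and that $\widehat{\mathcal{D}^2_{q,x}u}=-\xi^2\widehat{u}$ persists for $u$ only in $W_q^2(\mathbb{R}^+_q)$ rather than in $S_q(\mathbb{R}^+_q)$ — both handled by the density of $S_q(\mathbb{R}^+_q)$ together with dominated convergence for Jackson integrals, and both relying on the positivity of $m$ to keep the symbols $e^{-t(\xi^2+m)}$ uniformly bounded.
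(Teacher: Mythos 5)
Your proposal is correct and follows essentially the same route as the paper: apply the $q^2$-Fourier transform to reduce \eqref{additive3.1} to the scalar ODE $\partial_t\widehat{u}+(\xi^2+m)\widehat{u}=\widehat{f}$, solve it by Duhamel's formula, invert to obtain \eqref{additive3.1A}, and establish the regularity and uniqueness via the Parseval identity \eqref{additive2.2}. The only cosmetic difference is that you bound the Duhamel term by Minkowski's integral inequality where the paper uses Cauchy--Schwarz, and you are somewhat more explicit about the density and dominated-convergence justifications that the paper leaves implicit.
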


\begin{proof} {\it Existence.} Let us fix $t\in[0, T]$. Then, by using (\ref{additive2.3})  we have that
\begin{eqnarray}\label{additive3.2}
\widehat{\mathcal{D}^2_{q,x}u}(t, \xi) &=&\frac{1}{2\pi_q}\int\limits_0^\infty \mathcal{D}^2_{q,x}u(t, x)e_{q^2}(-i x \xi)d_qx=-\xi^2\widehat{u}(t, \xi),
\end{eqnarray}
for all $(t, \xi) \in[0, T]\times\mathbb R_{q}^{+}$.

Now, by multiplying both sides of (\ref{additive3.1}) by $\frac{1}{2\pi_q}e_{q^2}(-i x \xi)$ and $q$-integrating with respect to $x$ we find that
\begin{equation}\label{additive3.3}
\widehat{u_t}(t, \xi)+\left(m+\xi^2\right)\widehat{u}(t, \xi)=\widehat{f}(t, \xi),
\end{equation}
for all $\xi\in\mathbb R_{q}^{+}$, and all $t\in [0, T]$.

Applying the $q^2$-Fourier transform and using the definition of the ordinary derivative, we get that
\begin{eqnarray}\label{additive3.4}
\frac{\partial}{\partial{t}}\widehat{u}(t, \xi)
&=&\lim\limits_{h\rightarrow0}\frac{\widehat{u}(t+h, \xi)-\widehat{u}(t, \xi)}{h}\nonumber\\
&=&\lim\limits_{h\rightarrow0}\frac{1}{2\pi_q}\int\limits_0^\infty\frac{u(t+h, \xi)-u(x,t)}{h}e_{q^2}(-i x \xi)d_qx\nonumber\\
&=&\frac{1}{2\pi_q}\int\limits_0^\infty\lim\limits_{h\rightarrow0}\frac{u(t+h, \xi)-u(x,t)}{h}e_{q^2}(-i x \xi)d_qx\nonumber\\
&=&\frac{1}{2\pi_q}\int\limits_0^\infty \frac{\partial{u}}{\partial{t}}(t, x)e_{q^2}(-i x \xi)d_qx\nonumber\\
&=&\widehat{u}_t(t, \xi).
\end{eqnarray}

From (\ref{additive3.3}) and (\ref{additive3.4}) it follows that
\begin{equation}\label{additive3.5}
\left\{
  \begin{array}{ll}
    \widehat{u}_t(t, \xi)+\xi^2\widehat{u}(t, \xi)+m\widehat{u}(t, \xi)=\widehat{f}(t, \xi), & (t, \xi)\in[0, T]\times\mathbb{R}^+_q, \\
    \widehat{u}(0, \xi)=\widehat{\varphi}(\xi),   & \xi\in \mathbb{R}^{+}_{q}.
  \end{array}
\right.
\end{equation}

The solution of the non--homogeneous ordinary differential equation with the constant coefficient
\begin{equation*}
\label{additive3.6}
\widehat{u}_{t}(t, \xi)+\left(\xi^2+m\right)\widehat{u}(t, \xi)=\widehat{f}(t, \xi),
\end{equation*}
which is satisfying the Cauchy data $\widehat{u}(0, \xi)=\widehat{\varphi}(\xi),$ is the following function
\begin{equation}
\label{additive3.7}
\widehat{u}(t,\xi) = \int\limits_{0}^{t} e^{ - (t-\tau)(\xi^2+m)}\widehat{f}(\tau,\xi)d\tau + \widehat{\varphi}(\xi) e^{-t(m+\xi^2)},
\end{equation}
for all $\xi\in\mathbb R_{q}^{+}$.

Finally, we obtain that
\begin{equation}
\label{additive3.8}
\begin{split}
u(t,x) & = \frac{1}{2\pi_q}
\int\limits_0^\infty\left[\int\limits_{0}^{t} e^{-(t-\tau)(\xi^2+m)}\widehat{f}(\tau,\xi)d\tau + e^{-t(m+\xi^2)} \widehat{\varphi}(\xi) \right]e_{q^2}(i\xi x)d_q\xi \\
& = \frac{1}{4\pi_q^{2}}
\int\limits_{0}^{t} \int\limits_0^\infty \int\limits_0^\infty e^{ - (t-\tau)(\xi^2+m)}  f(\tau, x) e_{q^2}(-i\xi x) e_{q^2}(i\xi x) d_q x d_q\xi d\tau \\ 
& \,\,\,\,\,\,\,\,\,\,\,\,\,\,\,\,\,\,\,\, 
+ \frac{1}{4\pi_q^{2}}
\int\limits_0^\infty \int\limits_0^\infty e^{-t(m+\xi^2)} \varphi(x) e_{q^2}(-i\xi x) e_{q^2}(i\xi x) d_q x d_q\xi,
\end{split}
\end{equation}
for all $(t, x)\in[0, T]\times\mathbb R_{q}^{+}$.

Next we will prove that $u\in C^1\left([0, T]; L^2_q\left(\mathbb{R}^+_q\right) \right) \cap C\left( [0, T]; W_q^2\left(\mathbb{R}^+_q\right)\right)$. By using (\ref{additive3.7}) and the Cauchy–Schwarz inequality, we can deduce that
\begin{equation}
\label{additive3.9}
\begin{split}
\left|\widehat{u}(t,\xi)\right|^2 
&\leq \left| \int\limits_{0}^{t} e^{ - (t-\tau)(\xi^2+m)}\widehat{f}(\tau,\xi)d\tau \right|^2 + \left|\widehat{\varphi}(\xi) e^{-t(m+\xi^2)}\right|^2 \\   
&\leq t \int\limits_{0}^{t} \left| e^{ - (t-\tau)(\xi^2+m)}\widehat{f}(\tau,\xi)\right|^2 d\tau  + e^{- 2 t m} \left|\widehat{\varphi}(\xi) \right|^2 \\   
&\leq t \int\limits_{0}^{t} \left| \widehat{f}(\tau,\xi)\right|^2 d\tau  + e^{- 2 t m} \left|\widehat{\varphi}(\xi) \right|^2,
\end{split}
\end{equation}
for all $\xi\in\mathbb R_{q}^{+}$.

Since $\varphi \in W_q^2\left(\mathbb{R}^+_q\right), f\in C\left([0, T]; W^2_q\left(\mathbb{R}^+_q\right) \right)$ and by using the Parseval's identity, we arrive at
\begin{equation}
\label{PI-1}
\begin{split}
\|u(t,\cdot)\|^{2}_{L_q^2\left(\mathbb{R}^+_q\right)} & = \sum_{\xi\in\mathbb R^{+}_{q}} \left|\widehat{u}(t,\xi)\right|^2 \\
& \leq t \int\limits_{0}^{t} \sum_{\xi\in\mathbb R^{+}_{q}} \left| \widehat{f}(\tau,\xi)\right|^2 d\tau  + e^{- 2 t m} \sum_{\xi\in\mathbb R^{+}_{q}} \left|\widehat{\varphi}(\xi) \right|^2 \\
& = t \int\limits_{0}^{t} \|f(\tau,\cdot)\|^{2}_{L_q^2\left(\mathbb{R}^+_q\right)} d\tau  + e^{- 2 t m}  \|\varphi\|^{2}_{L_q^2\left(\mathbb{R}^+_q\right)} \\
& \leq T^{2} \|f\|^{2}_{C\left([0, T]; L^2_q\left(\mathbb{R}^+_q\right) \right)} +   \|\varphi\|^{2}_{L_q^2\left(\mathbb{R}^+_q\right)} < \infty,
\end{split}
\end{equation}
for $T<\infty$. Thus, we can conclude that $\|u\|_{C\left([0, T]; L^2_q\left(\mathbb{R}^+_q\right) \right)}<\infty$.

Repeating the procedure above, we have that
\begin{equation}
\label{PI-2}
\begin{split}
\|u(t,\cdot)\|^{2}_{W_q^2\left(\mathbb{R}^+_q\right)} 
& = \sum_{\xi\in\mathbb R^{+}_{q}} \xi^{2} \left|\widehat{u}(t,\xi)\right|^2 \\
& \leq t \int\limits_{0}^{t} \sum_{\xi\in\mathbb R^{+}_{q}} \xi^{2} \left| \widehat{f}(\tau,\xi)\right|^2 d\tau  + e^{- 2 t m} \sum_{\xi\in\mathbb R^{+}_{q}} \xi^{2} \left|\widehat{\varphi}(\xi) \right|^2 \\
& = t \int\limits_{0}^{t} \|f(\tau,\cdot)\|^{2}_{W_q^2\left(\mathbb{R}^+_q\right)} d\tau  + e^{- 2 t m}  \|\varphi\|^{2}_{W_q^2\left(\mathbb{R}^+_q\right)} \\
& \leq T^{2} \|f\|^{2}_{C\left([0, T]; W^2_q\left(\mathbb{R}^+_q\right) \right)} +   \|\varphi\|^{2}_{W_q^2\left(\mathbb{R}^+_q\right)} < \infty,
\end{split}
\end{equation}
for $T<\infty$, and we find that $\|u\|_{C\left([0, T]; W^2_q\left(\mathbb{R}^+_q\right) \right)}<\infty$. 

In an analogical way, we can derive the estimate $\|u\|_{C^{1}\left([0, T]; L^2_q\left(\mathbb{R}^+_q\right) \right)}<\infty$, which ends proof of the    existence part. 

{\it Uniqueness.}  Let us obtain the requesteduniqueness by contradiction. Let us assume that there are two different solutions  $u(t,x)$ and $v(t,x)$ of Problem \ref{Pr1} such that
\begin{eqnarray*}
\left\{
  \begin{array}{ll}
    u_t(t,x)-\mathcal{D}^2_{q,x} u(t,x)+m u(t,x)=f(t,x), & (t,x)\in\hbox{$[0, T]\times\mathbb{R}^+_q$,} \\
    u(0,x)=\varphi(x),   & \hbox{$x\in \mathbb{R}^+_q$,}
  \end{array}
\right.
\end{eqnarray*}
and
\begin{eqnarray*}
\left\{
  \begin{array}{ll}
    v_t(t,x)-\mathcal{D}^2_{q,x}v(t,x)+mv(t,x)=f(t,x), & (t,x)\in\hbox{$[0, T]\times\mathbb{R}^+_q$,} \\
    v(0,x)=\varphi(x),    & \hbox{$x\in \mathbb{R}^+_q$.}
  \end{array}
\right.
\end{eqnarray*}

Denote $W(t, x)\equiv u(t, x)-v(t, x) $. Then the function $W(t, x)$ is the solution of the following problem.
\begin{eqnarray*}
\left\{
  \begin{array}{ll}
    W_{tt}(t,x)-\mathcal{D}^2_{q,x}W(t,x)+mW(t,x)=0, & (t, x)\in\hbox{$[0, T]\times\mathbb{R}^+_q$,} \\
    W(0, x)=0,    & \hbox{$x\in \mathbb{R}^+_q$.}
  \end{array}
\right.
\end{eqnarray*}

From  (\ref{additive3.8}) it follows that $W(t, x)\equiv0$. Hence, $u(t, x)\equiv v(t, x)$. This contradiction shows that our assumption is wrong so the solution is unique.  The proof is complete.

\end{proof}

Our next main result is to derive a corresponding result for the one dimensional $q$-Wave Equation.

\begin{prm}[$q$-Wave Equation]
\label{Pr2}
Let $b,m \in \mathbb{R^+}$. We consider the following homogeneous wave equation with the initial conditions
\begin{equation}
\label{additive3.1B}
\left\{
  \begin{array}{ll}
    u_{tt}(t,x)-\mathcal{D}^2_{q,x}u(t,x)+bu_{t}(t,x)+mu(t,x)=0, & (t,x)\in\hbox{$[0, T]\times\mathbb{R}^+_q$,} \\
    u(0,x)=\varphi(x),   u_t(0,x)=\psi(x), & \hbox{$x\in \mathbb{R}^+_q$},
  \end{array}
\right.
\end{equation}
where $0<T<\infty$.
\end{prm}

\begin{thm}
\label{TH2}
Fix $0<T<\infty$. Let $b>0$ and $m>0$ be such that $b^2<4m$. Assume that $\varphi \in W_q^2\left(\mathbb{R}^+_q\right)$ and $\psi\in W_q^1\left(\mathbb{R}^+_q\right)$. Then there exists a unique solution
$$
u\in C^2\left([0, T]; L^2_q\left(\mathbb{R}^+_q\right) \right)\cap C\left([0, T]; W_q^2\left(\mathbb{R}^+_q\right) \right),
$$
of Problem \ref{Pr2}. Moreover, this solution can be represented by the explicit formula
\begin{equation}
\label{additive3.2B}
u(t,x)=
\int\limits_0^\infty \Phi(t, x, y) \varphi(y) d_q y +\int\limits_0^\infty \Psi(t, x, y) \psi(y) d_q y, 
\end{equation}
where
\begin{equation*}
\begin{split}
\Phi(t, x, y) = \frac{e^{-\frac{b}{2} t}}{8\pi_q^{2}} \int\limits_0^\infty K(t, \xi) e_{q^2}( - i\xi y) e_{q^2}( i \xi x) d_q\xi,
\end{split}
\end{equation*}
with
$$ 
K(t, \xi) = e^{\frac{\sqrt{b^2 -4\left(m+\xi^2\right)}}{2} t} + e^{ - \frac{\sqrt{b^2 -4\left(m+\xi^2\right)}}{2} t} + b \frac{e^{\frac{\sqrt{b^2 -4\left(m+\xi^2\right)}}{2} t} - e^{ - \frac{\sqrt{b^2 -4\left(m+\xi^2\right)}}{2} t}}{\sqrt{b^2 -4\left(m+\xi^2\right)}},
$$
and
$$
\Psi(t, x, y) = \frac{e^{-\frac{b}{2} t}}{4\pi_q^{2}}\int\limits_0^\infty \frac{e^{\frac{\sqrt{b^2 -4\left(m+\xi^2\right)}}{2} t} - e^{-\frac{\sqrt{b^2 -4\left(m+\xi^2\right)}}{2} t}}{\sqrt{b^2 -4\left(m+\xi^2\right)}} \, e_{q^2}(-i\xi y) \, e_{q^2}(i\xi x) \, d_q \xi,
$$

\end{thm}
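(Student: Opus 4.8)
The plan is to mimic the Fourier-analytic argument used for Theorem \ref{TH1}, now dealing with a second-order ODE in $t$ rather than a first-order one. First I would fix $\xi\in\mathbb{R}^+_q$, apply the $q^2$-Fourier transform in $x$ to \eqref{additive3.1B}, use \eqref{additive3.2}, and recall (as established by the computation \eqref{additive3.4}) that the transform commutes with $\partial_t$. This turns Problem \ref{Pr2} into the constant-coefficient Cauchy problem
\[
\widehat u_{tt}(t,\xi)+b\,\widehat u_t(t,\xi)+\bigl(m+\xi^2\bigr)\widehat u(t,\xi)=0,\qquad \widehat u(0,\xi)=\widehat\varphi(\xi),\quad \widehat u_t(0,\xi)=\widehat\psi(\xi).
\]
Its characteristic roots are $\lambda_\pm(\xi)=\tfrac12\bigl(-b\pm\sqrt{b^2-4(m+\xi^2)}\bigr)$; since $b^2<4m\le 4(m+\xi^2)$ the discriminant is \emph{strictly negative for every} $\xi$, so $\lambda_\pm(\xi)=-\tfrac b2\pm i\omega(\xi)$ with $\omega(\xi):=\tfrac12\sqrt{4(m+\xi^2)-b^2}>0$. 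Solving with the prescribed Cauchy data and rewriting $\cos\omega(\xi)t$ and $\omega(\xi)^{-1}\sin\omega(\xi)t$ through the formal symbol $\sqrt{b^2-4(m+\xi^2)}=2i\omega(\xi)$ gives
\[
\widehat u(t,\xi)=\frac{e^{-bt/2}}{2}\,K(t,\xi)\,\widehat\varphi(\xi)+e^{-bt/2}\,\frac{e^{\frac{\sqrt{b^2-4(m+\xi^2)}}{2}t}-e^{-\frac{\sqrt{b^2-4(m+\xi^2)}}{2}t}}{\sqrt{b^2-4(m+\xi^2)}}\,\widehat\psi(\xi),
\]
with $K(t,\xi)$ as in the statement.

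Next I would apply the inverse $q^2$-Fourier transform $u(t,x)=\tfrac1{2\pi_q}\int_0^\infty\widehat u(t,\xi)e_{q^2}(i\xi x)\,d_q\xi$, substitute $\widehat\varphi(\xi)=\tfrac1{2\pi_q}\int_0^\infty\varphi(y)e_{q^2}(-i\xi y)\,d_qy$ and the analogous formula for $\widehat\psi$, and interchange the two $q$-integrals. This produces exactly \eqref{additive3.2B}; the constant $\tfrac1{8\pi_q^2}$ in $\Phi$ (resp. $\tfrac1{4\pi_q^2}$ in $\Psi$) arises from $\tfrac1{2\pi_q}\cdot\tfrac1{2\pi_q}$ times the coefficient $\tfrac12$ (resp. $1$) multiplying $\widehat\varphi$ (resp. $\widehat\psi$). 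The interchange is justified by the uniform bounds produced in the regularity step.

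For the regularity $u\in C^2([0,T];L^2_q(\mathbb{R}^+_q))\cap C([0,T];W^2_q(\mathbb{R}^+_q))$, the elementary estimates to record are $|e^{-bt/2}|\le1$, $\bigl|e^{\pm\frac{\sqrt{b^2-4(m+\xi^2)}}{2}t}\bigr|=|e^{\pm i\omega(\xi)t}|=1$, and
\[
\Bigl|\frac{e^{\frac{\sqrt{b^2-4(m+\xi^2)}}{2}t}-e^{-\frac{\sqrt{b^2-4(m+\xi^2)}}{2}t}}{\sqrt{b^2-4(m+\xi^2)}}\Bigr|=\frac{|\sin\omega(\xi)t|}{\omega(\xi)}\le\min\bigl(t,\tfrac1{\omega(\xi)}\bigr).
\]
Since $4(m+\xi^2)-b^2=4\xi^2+(4m-b^2)$ with $4m-b^2>0$, one has $\omega(\xi)^2\asymp 1+\xi^2$; hence $|K(t,\xi)|\le 2+bT$ uniformly in $\xi$ and $t\in[0,T]$, while the $\widehat\psi$-coefficient of $\widehat u$ carries a gain of one power $(1+\xi^2)^{-1/2}$. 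Combining these with Parseval's identity \eqref{additive2.2} yields, for $0\le t\le T$,
\[
\|u(t,\cdot)\|^2_{L^2_q(\mathbb{R}^+_q)}\lesssim\|\varphi\|^2_{L^2_q(\mathbb{R}^+_q)}+\|\psi\|^2_{L^2_q(\mathbb{R}^+_q)},\qquad \|u(t,\cdot)\|^2_{W^2_q(\mathbb{R}^+_q)}\lesssim\|\varphi\|^2_{W^2_q(\mathbb{R}^+_q)}+\|\psi\|^2_{W^1_q(\mathbb{R}^+_q)}.
\]
Differentiating $\widehat u(t,\xi)$ once (resp. twice) in $t$ brings down one factor $\omega(\xi)\sim\sqrt{1+\xi^2}$ (resp. $\omega(\xi)^2\sim 1+\xi^2$) from $\partial_t\cos\omega(\xi)t$, so that $\|u_t(t,\cdot)\|^2_{L^2_q}\lesssim\|\varphi\|^2_{W^1_q}+\|\psi\|^2_{L^2_q}$ and $\|u_{tt}(t,\cdot)\|^2_{L^2_q}\lesssim\|\varphi\|^2_{W^2_q}+\|\psi\|^2_{W^1_q}$, all finite under the hypotheses; this is exactly why one asks $\varphi\in W^2_q$ but only $\psi\in W^1_q$. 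Continuity in $t$ in each of these norms follows from dominated convergence, the majorants being the integrable functions just exhibited and the $t$-dependence entering only through the continuous factors $e^{-bt/2}$, $\cos\omega(\xi)t$, $\sin\omega(\xi)t$. Differentiating under the $q$-integral sign then confirms that $u$ solves \eqref{additive3.1B} with the prescribed initial data.

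Uniqueness is immediate: if $u,v$ both solve Problem \ref{Pr2}, then $W:=u-v$ solves the homogeneous problem with $W(0,\cdot)=W_t(0,\cdot)=0$; its $q^2$-Fourier transform $\widehat W(\cdot,\xi)$ satisfies the ODE above with zero Cauchy data, hence $\widehat W(t,\xi)\equiv0$ by uniqueness for linear ODEs, and therefore $W\equiv0$. The main obstacle is the bookkeeping in the regularity step — verifying that each $t$-differentiation costs precisely one $\xi$-power while the $\widehat\psi$-term carries the compensating factor $1/\omega(\xi)$, which together account for the asymmetric smoothness requirements; everything else is a direct transcription of the proof of Theorem \ref{TH1}, modulo justifying the interchanges of the transform with $\partial_t$, with $\mathcal{D}^2_{q,x}$, and of the two $q$-integrals.
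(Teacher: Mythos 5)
Your proposal is correct and follows essentially the same route as the paper: apply the $q^2$-Fourier transform to reduce Problem \ref{Pr2} to the constant-coefficient ODE $\widehat u_{tt}+b\widehat u_t+(m+\xi^2)\widehat u=0$, solve with the given Cauchy data, invert the transform to obtain \eqref{additive3.2B}, establish the regularity via the bounds $\left|\tfrac{\sqrt{b^2-4(m+\xi^2)}}{2}\right|\asymp(1+\xi^2)^{1/2}$ together with Parseval's identity, and prove uniqueness by considering the difference of two solutions. Your treatment of the complex characteristic roots and the explicit $\min(t,1/\omega(\xi))$ bound is somewhat more careful than the paper's, but the argument is the same.
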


\begin{proof} 
{\it Existence.} Multiplying both sides of (\ref{additive3.1B}) by $\frac{1}{2\pi_q}e_{q^2}(-i\xi x)$ and by $q$-integrating with respect to $x$, we obtain that
\begin{equation}
\label{additive3.3B}
\widehat{u_{tt}}(t, \xi)-\widehat{\mathcal{D}^2_{q,x}u}(t, \xi)+b\widehat{u_t}(t, \xi)+m\widehat{u}(t, \xi)=0, \,\,\, t\in[0, T],
\end{equation}
for all $\xi\in\mathbb R^{+}_{q}$.

Then, by the property (\ref{additive2.3}) we have that
\begin{eqnarray}
\label{additive3.4B}
\widehat{\mathcal{D}^2_{q,x}u}(t, \xi) &=&\frac{1}{2\pi_q}\int\limits_0^\infty \mathcal{D}_{q,x}u(t, x)e_{q^2}(-i\xi x)d_qx
=-\xi^2\widehat{u}(t, \xi),
\end{eqnarray}
for all $\xi\in\mathbb R^{+}_{q}$, for $t\in[0, T]$.

By repeating (\ref{additive3.4}) again, we find that
\begin{eqnarray}
\label{additive3.6B}
\widehat{u}_{tt}(t, \xi)&=&\frac{\partial^2{u}}{\partial{t^2}}\widehat{u}(t, \xi), \,\,\, t\in[0, T], 
\end{eqnarray}
for all $\xi\in\mathbb R^{+}_{q}$.

Now by substituting (\ref{additive3.4}),  (\ref{additive3.4B})  and (\ref{additive3.6B}) into (\ref{additive3.3B}),  we can conclude that
\begin{equation}
\label{additive3.7B}
\left\{
  \begin{array}{ll}
    \widehat{u}_{tt}(t, \xi)+b\widehat{u}_{t}(t, \xi)+(m+\xi^2)\widehat{u}(t, \xi)=0, \,\,\, t\in[0, T], \\
    \widehat{u}(0, \xi)=\widehat{\varphi}(\xi),  \\
    \widehat{u}_{t}(0, \xi)=\widehat{\psi}(\xi),
  \end{array}
\right.
\end{equation}
for all $\xi\in \mathbb{R}_q^+$.

The general solution of (\ref{additive3.7B}) is
\begin{eqnarray}
\label{additive3.8B}
\widehat{u}(t,\xi)=\widehat{G}_1(\xi)e^{(-\frac{b}{2}+\frac{\sqrt{b^2 -4\left(m+\xi^2\right)}}{2})t}+\widehat{G}_2(\xi)e^{(-\frac{b}{2}-\frac{\sqrt{b^2 -4\left(m+\xi^2\right)}}{2})t},
\end{eqnarray}
where $\widehat{G}_1(\xi), \widehat{G}_2(\xi)$ are arbitrary constants.

From the initial conditions  (\ref{additive3.7B}) and (\ref{additive3.8B}) we obtain that 
\begin{equation*}
\begin{split}
\widehat{u}(0, \xi)& =\widehat{G}_1(\xi)+\widehat{G}_2(\xi)=\widehat{\varphi}(\xi), \\
\widehat{u}_t(0,\xi)
& =\left(-\frac{b}{2}+\frac{\sqrt{b^2 -4\left(m+\xi^2\right)}}{2}\right)\widehat{G}_1(\xi)+\left(-\frac{b}{2}-\frac{\sqrt{b^2 -4\left(m+\xi^2\right)}}{2}\right)\widehat{G}_2(\xi) \\
& =\widehat{\psi}(\xi).
\end{split}    
\end{equation*}

Hence,
\begin{eqnarray}\label{additive3.9B}
\widehat{G}_1(\xi)&=&\left[\frac{1}{2}+\frac{b}{2\sqrt{b^2 -4\left(m+\xi^2\right)}}\right]\widehat{\varphi}(\xi)+\frac{1}{\sqrt{b^2 -4\left(m+\xi^2\right)}}\widehat{\psi}(\xi),
\end{eqnarray}
and
\begin{eqnarray}\label{additive3.10B}
\widehat{G}_2(\xi)&=&\left[\frac{1}{2}-\frac{b}{2\sqrt{b^2 -4\left(m+\xi^2\right)}}\right]\widehat{\varphi}(\xi)-\frac{1}{\sqrt{b^2 -4\left(m+\xi^2\right)}}\widehat{\psi}(\xi).
\end{eqnarray}

By using (\ref{additive3.9B}), (\ref{additive3.10B}) and inverse $q^2$-Fourier transform (see Definition 2.1) in (\ref{additive3.8B}), we find that
\begin{equation}
\label{additive3.11B}
\begin{split}
u(t,x)=
\int\limits_0^\infty \Phi(t, x, y) \varphi(y) d_q y +\int\limits_0^\infty \Psi(t, x, y) \psi(y) d_q y, 
\end{split}
\end{equation}
where
\begin{equation*}
\begin{split}
\Phi(t, x, y) = \frac{e^{-\frac{b}{2} t}}{8\pi_q^{2}} \int\limits_0^\infty K(t, \xi) e_{q^2}( - i\xi y) e_{q^2}( i \xi x) d_q\xi,
\end{split}
\end{equation*}
with
$$ 
K(t, \xi) = e^{\frac{\sqrt{b^2 -4\left(m+\xi^2\right)}}{2} t} + e^{ - \frac{\sqrt{b^2 -4\left(m+\xi^2\right)}}{2} t} + b \frac{e^{\frac{\sqrt{b^2 -4\left(m+\xi^2\right)}}{2} t} - e^{ - \frac{\sqrt{b^2 -4\left(m+\xi^2\right)}}{2} t}}{\sqrt{b^2 -4\left(m+\xi^2\right)}},
$$
and 
$$
\Psi(t, x, y) = \frac{e^{-\frac{b}{2} t}}{4\pi_q^{2}}\int\limits_0^\infty \frac{e^{\frac{\sqrt{b^2 -4\left(m+\xi^2\right)}}{2} t} - e^{-\frac{\sqrt{b^2 -4\left(m+\xi^2\right)}}{2} t}}{\sqrt{b^2 -4\left(m+\xi^2\right)}} \, e_{q^2}(-i\xi y) \, e_{q^2}(i\xi x) \, d_q \xi.
$$

Hence , the existence part is proved.

Next we will prove that
\begin{eqnarray}\label{additive3.11B}
u\in C^2\left([0, T]; L^2_q\left(\mathbb{R}^+_q\right) \right)\cap C\left([0, T]; W_q^2\left(\mathbb{R}^+_q\right) \right).
\end{eqnarray}

According to the relations (\ref{additive3.8B}), (\ref{additive3.9B}) and (\ref{additive3.10B}) and taking into account that $\left|\frac{\sqrt{b^2 -4\left(m+\xi^2\right)}}{2}\right|\approx\left(1+\xi^2\right)^\frac{1}{2}\approx\left(1+\xi\right)$, we we can make the following estimates:
\begin{eqnarray*}
\left|\widehat{u}(t,\xi)\right|&\lesssim &\left|\left[\frac{1}{2}-\frac{-\frac{b}{2}}{2\frac{\sqrt{b^2 -4\left(m+\xi^2\right)}}{2}}\right]\widehat{\varphi}(\xi)+
\frac{1}{2\frac{\sqrt{b^2 -4\left(m+\xi^2\right)}}{2}}\widehat{\psi}(\xi)\right|
\\
&\lesssim &\left|\widehat{\varphi}(\xi)\right|+\frac{1}{1+\xi}\left|\widehat{\psi}(\xi)\right|,
\\
\left|\widehat{\partial_tu}(t,\xi)\right|&\lesssim &(1+\xi)\left|\widehat{\varphi}(\xi)\right| +
\left|\widehat{\psi}(\xi)\right|,
\end{eqnarray*}
and
\begin{eqnarray*}
\left|\widehat{\partial^2_tu}(t,\xi)\right|&\lesssim &(1+\xi)^2\left|\widehat{\varphi}(\xi)\right|+
(1+\xi)\left|\widehat{\psi}(\xi)\right|,
\end{eqnarray*}
for all $\xi\in\mathbb R_{q}^{+}$.

Since  $\varphi \in W_q^2\left(\mathbb{R}^+_q\right), \psi\in W_q^1\left(\mathbb{R}^+_q\right)$,
and by using the Parseval's identity and repeating the convergence part of Theorem \ref{TH1}, we arrive at
\begin{eqnarray*}
\|u\|_{C\left([0, T]; W_q^2\left(\mathbb{R}^+_q\right) \right)}
\lesssim\|\varphi\|_{W_q^2\left(\mathbb{R}^+_q\right)}+\|\psi\|_{W^1_q(\mathbb{R}^+_q)}<\infty,
\end{eqnarray*}
and
\begin{eqnarray*}
\|u\|_{C^2\left([0, T]; L^2_q\left(\mathbb{R}^+_q\right) \right)}
\lesssim\|\varphi\|_{W_q^2\left(\mathbb{R}^+_q\right)}+\|\psi\|_{W_q^1\left(\mathbb{R}^+_q\right)}<\infty,
\end{eqnarray*}
and (\ref{additive3.11B}) is proved.

{\it Uniqueness.} It only remains to prove the uniqueness of the solution. We assume the opposite, namely that there exist the functions $u(t,x)$ and $v(t,x)$, which are two different solutions of Problem \ref{Pr2}. Thus, we have that
$$
\left\{
  \begin{array}{ll}
    u_{tt}(t, x)-\mathcal{D}^2_{q,x}u(t, x)+m\nu(t, x)=f(t, x), & (t, x)\in\hbox{$[0, T]\times\mathbb{R}^+_q$,} \\
    u(0, x)=\varphi(x),  u_t(0, x)=\psi(x)  & \hbox{$x\in \mathbb{R}^+_q$,}
  \end{array}
\right.
$$
and
$$
\left\{
  \begin{array}{ll}
    v_{tt}(t, x)-\mathcal{D}^2_{q,x}v(t, x)+mv(t, x)=f(t, x), & (t, x)\in\hbox{$[0, T]\times\mathbb{R}^+_q$,} \\
    v(0, x)=\varphi(x),    v_t(0, x)=\varphi(x),& \hbox{$x\in \mathbb{R}^+_q$.}
  \end{array}
\right.
$$

We define $W(t,x)=u(t,x)-v(t,x)$. Then the function $W(t,x)$ is a solution of the following problem
\begin{eqnarray*}
\left\{
  \begin{array}{ll}
    W_{tt}(t, x)-\mathcal{D}^2_{q,x}W(t, x)+b W_{t}(t, x)+m W(t, x)=0, & (t, x)\in\hbox{$[0, T]\times\mathbb{R}^+_q$,} \\
    W(0, x)=0,   W_t(0, x)=0, & \hbox{$x\in \mathbb{R}^+_q$.}
  \end{array}
\right.
\end{eqnarray*}
From  (\ref{additive3.11B}) it follows that $W(t,x)\equiv0$, that is,  $u(x,t)\equiv v(x,t)$ and this
contradiction to our assumption proves the uniqueness of the solution. The proof is complete.
\end{proof}

\section{Final remark and result}
\label{S4}

\begin{rem} The technique we have developed above can be used to investigate also other $q$-equations. We just present one example of this fact by solving completely the following non-homogeneous equation  (\ref{additive4.1}) with  homogeneous boundary conditions:
\end{rem}

\begin{thm} 
\label{TH3} 
Fix $0<T<\infty$. Let $b>0$ and $m>0$ be such that $b^{2}<4m.$ Assume that $f\in C\left( [0, T]; W^1_q\left(\mathbb{R}^+_q\right)\right)$. Then there exists a unique solution
$$
u\in C^2\left([0, T]; L^2_q\left(\mathbb{R}^+_q\right) \right)\cap C\left([0, T]; W_q^2\left(\mathbb{R}^+_q\right) \right),
$$
of the Cauchy problem
\begin{equation}
\label{additive4.1}  
\begin{split}
& u_{tt}(t, x)-\mathcal{D}^2_{q,x}u(t, x)+bu_{t}(t, x)+mu(t, x)=f(t, x), \,\,\, (t, x)\in\hbox{$[0, T]\times\mathbb{R}^+_q$,} \\
& u(0, x)=0,   u_t(0, x)=0, \,\,\, \,\,\, \,\,\, \,\,\, \,\,\, \,\,\, \hbox{$x\in \mathbb{R}^+_q$.}
\end{split}
\end{equation}

Moreover, the solution can be represented by the formula
\begin{equation}
\label{additive4.2}
u(t,x)=\frac{1}{2\pi_q}
\int\limits_0^t \int\limits_0^\infty K(t, \tau, x, y) f(\tau, y)  d_q y d\tau,
\end{equation}
where 
$$
K(t, \tau, x, y) = \int\limits_0^\infty F(t, \tau, \xi) e_{q^2}( - i \xi y) e_{q^2}(i\xi x) d_q\xi,
$$
with
$$
F(t, \tau, \xi)=\frac{e^{-\frac{b}{2}+\frac{\sqrt{b^2 -4\left(m+\xi^2\right)}}{2}(t-\tau)}-e^{-\frac{b}{2}-\frac{\sqrt{b^2 -4\left(m+\xi^2\right)}}{2}(t-\tau)}}{\sqrt{b^2 -4\left(m+\xi^2\right)}}.
$$
\end{thm}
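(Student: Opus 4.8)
The plan is to mimic exactly the strategy used in the proofs of Theorems \ref{TH1} and \ref{TH2}: transfer the problem to the Fourier side, solve a one‑parameter family of constant‑coefficient linear ODEs in $t$, invert the $q^2$-Fourier transform to obtain the explicit formula \eqref{additive4.2}, and then verify the claimed regularity by Parseval's identity together with the multiplier bounds already established in Theorem \ref{TH2}. First I would apply the $q^2$-Fourier transform in the space variable $x$ to \eqref{additive4.1}. Using \eqref{additive2.3} to get $\widehat{\mathcal{D}^2_{q,x}u}(t,\xi) = -\xi^2\widehat{u}(t,\xi)$, and using the interchange of $\partial_t$ with the transform justified in \eqref{additive3.4}, the Cauchy problem becomes, for each fixed $\xi\in\mathbb{R}^+_q$,
\begin{equation*}
\widehat{u}_{tt}(t,\xi)+b\,\widehat{u}_t(t,\xi)+(m+\xi^2)\widehat{u}(t,\xi)=\widehat{f}(t,\xi),\qquad \widehat{u}(0,\xi)=0,\ \widehat{u}_t(0,\xi)=0.
\end{equation*}
This is a nonhomogeneous linear second-order ODE whose characteristic roots are $\lambda_\pm(\xi)=-\tfrac b2\pm\tfrac12\sqrt{b^2-4(m+\xi^2)}$; since $b^2<4m$ these are distinct (complex) and $\lambda_+\neq\lambda_-$, so Duhamel's principle gives the solution with zero Cauchy data as
\begin{equation*}
\widehat{u}(t,\xi)=\int_0^t F(t,\tau,\xi)\,\widehat{f}(\tau,\xi)\,d\tau,
\end{equation*}
with $F(t,\tau,\xi)=\dfrac{e^{\lambda_+(\xi)(t-\tau)}-e^{\lambda_-(\xi)(t-\tau)}}{\lambda_+(\xi)-\lambda_-(\xi)}$, which is precisely the kernel written in the statement. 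Applying the inverse $q^2$-Fourier transform (Definition \ref{Def:01}) and writing $\widehat{f}(\tau,\xi)=\frac{1}{2\pi_q}\int_0^\infty f(\tau,y)e_{q^2}(-i\xi y)\,d_q y$ then yields formula \eqref{additive4.2} after interchanging the $\tau$-, $y$- and $\xi$-integrations (legitimate by absolute convergence of the $q$-Jackson integrals).

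For the regularity claim $u\in C^2([0,T];L^2_q)\cap C([0,T];W^2_q)$, the key observation is that $F(t,\tau,\xi)$ and its $t$-derivatives up to second order are controlled by exactly the same multiplier asymptotics exploited in Theorem \ref{TH2}: since $|\lambda_\pm(\xi)|\approx(1+\xi^2)^{1/2}\approx 1+\xi$ and $\mathrm{Re}\,\lambda_\pm(\xi)=-b/2<0$, one gets $|F(t,\tau,\xi)|\lesssim (1+\xi)^{-1}$, $|\partial_t F(t,\tau,\xi)|\lesssim 1$, and $|\partial_t^2 F(t,\tau,\xi)|\lesssim 1+\xi$, uniformly for $0\le\tau\le t\le T$. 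Hence by the Cauchy–Schwarz inequality in $\tau$ (exactly as in \eqref{additive3.9}), for $j=0,1,2$,
\begin{equation*}
(1+\xi^2)\,\big|\partial_t^{\,j}\widehat{u}(t,\xi)\big|^2 \lesssim T\int_0^t (1+\xi^2)(1+\xi)^{2(j-1)}\big|\widehat{f}(\tau,\xi)\big|^2\,d\tau \lesssim T\int_0^t (1+\xi^2)^{j}\big|\widehat{f}(\tau,\xi)\big|^2\,d\tau,
\end{equation*}
so summing over $\xi\in\mathbb{R}^+_q$ and using Parseval's identity \eqref{additive2.2} bounds the relevant norms by $T^2\|f\|^2_{C([0,T];W^1_q)}$ when $j\le 2$ (the worst case $j=2$ needs one derivative of $f$, which is exactly the hypothesis $f\in C([0,T];W^1_q)$). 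Continuity in $t$ of each $\partial_t^j\widehat u(t,\cdot)$ in the appropriate norm follows from continuity of $F$ and its $t$-derivatives in $t$ together with dominated convergence, giving \eqref{additive4.2} as an element of the stated space.

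Finally, uniqueness follows as before by contradiction: if $u$ and $v$ both solve \eqref{additive4.1}, then $W=u-v$ solves the same problem with $f\equiv 0$ and zero Cauchy data, so $\widehat W(t,\xi)$ solves the homogeneous ODE with zero initial data, whence $\widehat W\equiv 0$ and thus $W\equiv 0$ by injectivity of the $q^2$-Fourier transform on $L^2_q(\mathbb{R}^+_q)$. The main obstacle — really the only point requiring care — is the regularity bookkeeping for the second $t$-derivative: one must check that differentiating the Duhamel integral twice in $t$ produces, besides the $\partial_t^2 F$ term, a boundary term coming from $F(t,t,\xi)$ and $\partial_t F(t,\tau,\xi)\big|_{\tau=t}$; here $F(t,t,\xi)=0$ and $\partial_t F(t,\tau,\xi)\big|_{\tau=t}=1$, so the differentiation goes through cleanly and the $(1+\xi)$ growth of $\partial_t^2 F$ is matched precisely by the single derivative of $f$ available in $W^1_q$, which is why the hypothesis is sharp and the argument closes.
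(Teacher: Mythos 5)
Your proposal is correct and follows essentially the same route as the paper: $q^2$-Fourier transform in $x$, variation of parameters (Duhamel) for the resulting family of second-order ODEs with zero Cauchy data, inversion to get the kernel formula, Parseval-based multiplier estimates using $|\lambda_+-\lambda_-|\approx 1+\xi$ for the regularity, and uniqueness by reducing the difference of two solutions to the homogeneous problem. Your explicit check of the boundary terms $F(t,t,\xi)=0$ and $\partial_t F|_{\tau=t}=1$ when differentiating the Duhamel integral twice is a point the paper passes over silently, but it does not change the argument.
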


\begin{proof} {\it Existence.} The first part of the
proof consists more as  less of repeating the  first part of the proof of Theorem \ref{TH2}. In fact, by taking $q^2$-Fourier transform of both sides of (\ref{additive4.1}) and discussing as before, we have that
\begin{eqnarray}\label{additive4.3}
\widehat{u}_{tt}(t)+b\widehat{u}_t(t)+\left(m+\xi^2\right)\widehat{u}(t)=\widehat{f}(t).
\end{eqnarray}

The general solution of the differential equation (\ref{additive4.3}) is
\begin{eqnarray}\label{additive4.4}
\widehat{u}(t,\xi)&=&\widehat{G}_1(\xi)e^{(-\frac{b}{2}+\frac{\sqrt{b^2 -4\left(m+\xi^2\right)}}{2})t}+\widehat{G}_2(\xi)e^{(-\frac{b}{2}-\frac{\sqrt{b^2 -4\left(m+\xi^2\right)}}{2})t}\nonumber\\
&+&\frac{1}{2\frac{\sqrt{b^2 -4\left(m+\xi^2\right)}}{2}}\int\limits_0^t\widehat{f}(\tau, \xi)
e^{(-\frac{b}{2}+\frac{\sqrt{b^2 -4\left(m+\xi^2\right)}}{2})(t-\tau)}d\tau\nonumber\\
&-&\frac{1}{2\frac{\sqrt{b^2 -4\left(m+\xi^2\right)}}{2}}\int\limits_0^t \widehat{f}(\tau, \xi)
e^{(-\frac{b}{2}-\frac{\sqrt{b^2 -4\left(m+\xi^2\right)}}{2})(t-\tau)}d\tau,
\end{eqnarray}
for all $\xi\in\mathbb R_{q}^{+}$.

 Moreover, by differentiating with help of Leibniz's rule we find that (Note that   the factor
 $\frac{1}{2\frac{\sqrt{b^2 -4\left(m+\xi^2\right)}}{2}} \widehat{f}(\tau, \xi)$ cancels):
\begin{eqnarray}
\label{additive4.5}
\widehat{u}_t(t,\xi)&=&(-\frac{b}{2}+\frac{\sqrt{b^2 -4\left(m+\xi^2\right)}}{2})\widehat{G}_1(\xi)e^{(-\frac{b}{2}+\frac{\sqrt{b^2 -4\left(m+\xi^2\right)}}{2})t}\nonumber\\
&+&(-\frac{b}{2}-\frac{\sqrt{b^2 -4\left(m+\xi^2\right)}}{2})
\widehat{G}_2(\xi)e^{(-\frac{b}{2}-\frac{\sqrt{b^2 -4\left(m+\xi^2\right)}}{2})t}\nonumber\\
&+&\frac{-\frac{b}{2}+\frac{\sqrt{b^2 -4\left(m+\xi^2\right)}}{2})}{2\frac{\sqrt{b^2 -4\left(m+\xi^2\right)}}{2}}\int\limits_0^t\widehat{f}(\tau, \xi)
e^{(-\frac{b}{2}+\frac{\sqrt{b^2 -4\left(m+\xi^2\right)}}{2})(t-\tau)}d\tau\nonumber\\
&-&\frac{-\frac{b}{2}-\frac{\sqrt{b^2 -4\left(m+\xi^2\right)}}{2}}{2\frac{\sqrt{b^2 -4\left(m+\xi^2\right)}}{2}}\int\limits_0^t \widehat{f}(\tau, \xi)
e^{(-\frac{b}{2}+\frac{\sqrt{b^2 -4\left(m+\xi^2\right)}}{2})(t-\tau)}d\tau.
\end{eqnarray}

By new using the initial conditions in (\ref{additive4.1}), from (\ref{additive4.4})-(\ref{additive4.5}) it follows that
\begin{eqnarray*}
\widehat{u}(0,\xi)&=&\widehat{G}_1(\xi)+\widehat{G}_2(\xi)=0,\\
\widehat{u}_t(0,\xi)&=&(-\frac{b}{2}+\frac{\sqrt{b^2 -4\left(m+\xi^2\right)}}{2})\widehat{G}_1(\xi)+(-\frac{b}{2}-\frac{\sqrt{b^2 -4\left(m+\xi^2\right)}}{2})\widehat{G}_2(\xi)=0.
\end{eqnarray*}

Therefore, $\widehat{G}_1(\xi)=\widehat{G}_2(\xi)=0$. Finally, we obtain that
\begin{equation}
\label{additive4.5A}
u(t,x)=\frac{1}{2\pi_q}
\int\limits_0^t \int\limits_0^\infty K(t, \tau, x, y) f(\tau, y)  d_q y d\tau,
\end{equation}
where 
$$
K(t, \tau, x, y) = \int\limits_0^\infty F(t, \tau, \xi) e_{q^2}( - i \xi y) e_{q^2}(i\xi x) d_q\xi,
$$
$$
F(t, \tau, \xi)=\frac{e^{(-\frac{b}{2}+\frac{\sqrt{b^2 -4\left(m+\xi^2\right)}}{2})(t-\tau)}-e^{(-\frac{b}{2}-\frac{\sqrt{b^2 -4\left(m+\xi^2\right)}}{2})(t-\tau)}}{\sqrt{b^2 -4\left(m+\xi^2\right)}},
$$
and the existence part is proved.

Next we will show that \begin{equation}\label{additive4.5B}
u\in C\left([0, T]; W_q^2\left(\mathbb{R}^+_q\right)\right)\cap C^2\left([0, T]; L_q^2\left(\mathbb{R}^+_q\right)\right). 
\end{equation}

From (\ref{additive4.4}) and (\ref{additive4.5}), by using the Cauchy-Schwartz inequality, it follows that
\begin{eqnarray*}
\left|\widehat{u}(t,\xi)\right|^2&\lesssim&\left(\int\limits_0^t \left|\frac{\widehat{f}(\tau, \xi)}{1+\xi}\right|
e^{-\frac{b}{2}(t-\tau)}d\tau \right)^{2}
\nonumber\\
&\lesssim& \int\limits_0^t \left|\frac{\widehat{f}(\tau, \xi)}{1+\xi}\right|^{2} d \tau,
\end{eqnarray*}
\begin{eqnarray*}
\left|\widehat{u}_t(t,\xi)\right|^2&\lesssim&\left(\int\limits_0^t \left|\widehat{f}(\tau, \xi)\right|
e^{-\frac{b}{2}(t-\tau)}d\tau \right)^{2}
\nonumber\\
&\lesssim& \int\limits_0^t \left|\widehat{f}(\tau, \xi)\right|^{2}
d\tau,
\end{eqnarray*}
and
\begin{eqnarray*}
\left|\widehat{u}_{tt}(t,\xi)\right|^2&\lesssim&\left(\int\limits_0^t \left|(1+\xi)\widehat{f}(\tau, \xi)\right|
e^{-\frac{b}{2}(t-\tau)}d\tau \right)^{2}
\nonumber\\
&\lesssim& \int\limits_0^t \left|(1+\xi)\widehat{f}(\tau, \xi)\right|^{2} d\tau.
\end{eqnarray*}

Since $f\in C\left([0, T]; W^1_q\left(\mathbb{R}^+_q\right) \right)$,
by new using the Parseval identity and repeating the convergence part of Theorem \ref{TH1}, we arrive at
\begin{eqnarray*}
\|u\|_{C\left([0, T]; W^2_q\left(\mathbb{R}^+_q\right) \right)}
&\lesssim&\|f\|_{C\left([0, T]; W^1_q\left(\mathbb{R}^+_q\right) \right)}<\infty,
\end{eqnarray*}
and
\begin{eqnarray*}
\|u\|_{C^2\left([0, T]; L^2_q\left(\mathbb{R}^+_q\right) \right)}
&\lesssim&\|f\|_{C\left([0, T]; W^1_q\left(\mathbb{R}^+_q\right) \right)}<\infty,
\end{eqnarray*}
so (\ref{additive4.5B}) is proved.

{\it Uniqueness.} To prove the uniqueness we assume by contradiction that both of the different   functions $u(t,x)$ and $v(t,x)$ are   solutions of Problem \ref{Pr2}, that is,
$$
\left\{
  \begin{array}{ll}
    u_{tt}(t, x)-\mathcal{D}^2_{q,x}u(t, x)+b u_t(t, x)+m u(t, x)=f(t, x), & (t, x)\in\hbox{$[0, T]\times\mathbb{R}^+_q$,} \\
    u(0, x)=u_t(0, x)=0,  & \hbox{$x\in \mathbb{R}^+_q$.}
  \end{array}
\right.
$$
and
$$
\left\{
  \begin{array}{ll}
    v_{tt}(t, x)-\mathcal{D}^2_{q,x}v(t, x)+bv_t(t, x)+mv(t, x)=f(t, x), & (t, x)\in\hbox{$[0, T]\times\mathbb{R}^+_q$,} \\
    v(0, x)=v_t(0, x)=0,    & \hbox{$x\in \mathbb{R}^+_q$.}
  \end{array}
\right.
$$

We define   $W(x,t)=u(x,t)-v(x,t)$. Then the function $W(t,x)$ is the solution of the problem
\begin{eqnarray*}
\left\{
  \begin{array}{ll}
    W_{tt}-\mathcal{D}^2_{q,x}W+bW_{t}+mW=0, & (t, x)\in\hbox{$\mathbb{R^+}\times\mathbb{R}^+_q$,} \\
    W|_{t=0}=W_t|_{t=0}=0, & \hbox{$x\in \mathbb{R}^+_q$.}
  \end{array}
\right.
\end{eqnarray*}

From  (\ref{additive4.5A}) it follows that $W(t,x)\equiv0$.  Hence, $u(t,x)\equiv v(t,x)$ which
contradicts to our assumption.  The proof is complete.

\end{proof}

\textbf{Acknowledgment}: The first author was supported by Ministry of Education and Science of the Republic of Kazakhstan Grant AP08052208.

\end{document}